\newtheorem{thm}{Theorem}[section]
\newtheorem{lemma}[thm]{Lemma}
\theoremstyle{definition}
\newtheorem{defn}[thm]{Definition}
\newtheorem{example}[thm]{Example}
\theoremstyle{remark}
\newtheorem{remark}[thm]{Remark}
\newcommand{\CC}{\mathbb C}
\newcommand{\Id}{\textrm{Id}}
\newcommand{\R}{\textrm{Re}}
\date{\today}
\author{Liz Raquel Vivas}
\address{The Ohio State University\\
Columbus, OH\\
USA}
\email{vivas.3@osu.edu}
\title[Unstable manifolds for parabolic skew-products]{Parametrization of unstable manifolds for parabolic skew-products}
\begin{document}
\bibliographystyle{plain}

\begin{abstract}
Given a parabolic map in one dimension $f(z) = z + O(z^2)$, $f \neq \textrm{Id}$, it is known that there exists the analogous of stable and unstable domains. That is, domains in which every point is attracted by $f$ (or by the inverse $f^{-1}$) towards the fixed point. In this paper we prove that there exists a natural parametrization for the unstable manifold in terms of iterates for some subset of parabolic maps.
Furthermore, we prove that this parametrization is valid also in the case of skew-product maps that satisfy certain conditions. Finally, we give an application of this fact to construct Fatou disks for skew-product maps, in a similar way than in the paper \cite{PeVi2014}.
\end{abstract}

\maketitle

\section{Introduction}\label{section:intro}

In a recent joint paper with Peters \cite{PeVi2014}, we construct a skew product map $F$ that have one dimensional Fatou-disks. That is, disks on which the restriction of $F$ and its iterates form a normal family. In this way we continued the study of skew-product maps and its Fatou components on a neighborhood of a fixed fiber \cite{Lilov}. Our method was to use a parametrization theorem of skew product maps for which the map on the parameter fiber is attracting (similar than \cite{Hubbard}), and in the other coordinate is repelling. Even more, our maps were resonant. A natural question is whether we can construct a similar case for skew-product maps that are parabolic. In this paper we prove that this is indeed the case.

We will prove this theorem using a similar strategy than the proof of our theorem on \cite{PeVi2014}. First we prove a parametrization theorem for a class of parabolic maps. In the following section we prove that a similar parametrization theorem can be obtained for skew product maps. In the last section we construct skew product maps that are parabolic in each coordinate that have Fatou disks. We also prove that our Fatou disks cannot be enlarged to Fatou components. If we could enlarge them, then we would have a wandering Fatou component. \footnote{For a very recent result concerning Fatou wandering components see \cite{ABDPR}.}

\section{Parametrization of unstable manifolds of parabolic maps in one dimension}

Given a parabolic map $F(z) = z + a_kz^k+O(z^{k+1}), a_k \neq 0, k \geq 2$, the Leau-Fatou theorem say that there exist $k-1$ regions in which each point is attracted to the origin under iterates by $F$, and also $k-1$ regions in which the orbits are going towards the origin under the iterates of $F^{-1}$. We can think of these regions as stable and unstable manifolds of our map $F$. In each one of these regions is possible to find a Fatou coordinate. That is, a change of coordinates map such that $F$ is conjugate to a translation. However, in order to find the change of coordinates, there is not an iterative process as it is the case for hyperbolic maps.

However, for a certain class of maps the change of coordinates can be recovered by using iterations of our parabolic map.

\begin{thm}
Let
$$
f(z) = z + z^2 + z^3 + O(z^4).
$$
Define:
$$
\phi_n(z):= f^n\left(\frac{z}{1+nz}\right).
$$
Then $\phi_n$ converges to a map $\phi$ defined in $V_\epsilon = \{z, |z-\epsilon| < \epsilon\}$ a parametrization of the unstable manifold of $f$, such that:
\begin{equation}\label{paramone}
\phi\left(\frac{t}{1-t}\right)=f(\phi(t)).
\end{equation}
Even more, if $z \in V_\epsilon$, then:
$$
|\phi_{n+1}(z)-\phi_n(z)| < \frac{C'|z|^2}{(1+n|z|)^2} < \frac{C'}{n^2}.
$$
\end{thm}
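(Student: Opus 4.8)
The plan is to conjugate $f$ by the involution $z \mapsto u = 1/z$, which sends the parabolic point to $\infty$ and turns $f$ into a near-translation, and then to run a telescoping estimate on the resulting approximate Abel coordinate. Set $g(u) := 1/f(1/u)$. Expanding $f(z) = z(1 + z + z^2 + O(z^3))$ one gets $1/f(z) = \tfrac1z - 1 + O(z^2)$, hence
\[
g(u) = u - 1 + r(u), \qquad |r(u)| \le \frac{C}{|u|^2},
\]
and $g$ is univalent on a half-plane $H_R := \{\,\R u > R\,\}$ for $R$ large. Under $z = 1/u$ the disk $V_\epsilon$ is exactly $H_R$ with $R = 1/(2\epsilon)$, the map $z \mapsto z/(1+nz)$ becomes $u \mapsto u+n$, and therefore $\phi_n$ corresponds to $\Psi_n(u) := g^n(u+n)$; similarly $t \mapsto t/(1-t)$ becomes $u \mapsto u-1$, so \eqref{paramone} is the identity $\Phi(u-1) = g(\Phi(u))$ for $\Phi := \lim_n \Psi_n$, which is immediate from $g(\Psi_n(u)) = g^{n+1}(u+n) = \Psi_{n+1}(u-1)$ together with the continuity of $g$. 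So the content of the theorem is the convergence of $\Psi_n$ and the quantitative estimate.

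The engine is a standard parabolic bootstrap. I would first show that for $u \in H_R$ and $0 \le k \le n$,
\[
\R\bigl(g^k(u+n)\bigr) \ \ge\ \R u + (n-k) - 1 \ \ge\ R,
\]
the point being that each application of $g$ lowers the real part by $1 + O(1/|u|^2)$, so the accumulated error is $\le \sum_{m \ge R-1} C/m^2 < 1$ when $R$ is large; the same bound holds on a fixed neighborhood of this orbit. Two consequences follow, both uniform in $n$ and $u$: since $g'(u) = 1 + O(1/|u|^3)$, the chain rule gives $|(g^n)'| \le \prod_{m \ge R-2}(1 + C/m^3) =: M$ on that neighborhood; and $|\Psi_n(u) - u| = \bigl|\sum_{k=0}^{n-1} r(g^k(u+n))\bigr| \le \sum_{m\ge R-1} C/m^2 =: S$, so that $|\Psi_n(u)| \ge |u| - S \ge |u|/2$ after possibly enlarging $R$.

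Next I would telescope. Writing
\[
\Psi_{n+1}(u) - \Psi_n(u) = g^n\bigl(g(u+n+1)\bigr) - g^n(u+n)
\]
and noting that $g(u+n+1) - (u+n) = r(u+n+1)$ has modulus $\le C/|u+n+1|^2$, the bound $|(g^n)'| \le M$ gives $|\Psi_{n+1}(u) - \Psi_n(u)| \le MC/|u+n+1|^2$. Converting back via $\phi_{n+1}(z) - \phi_n(z) = \bigl(\Psi_n(u)-\Psi_{n+1}(u)\bigr)/\bigl(\Psi_n(u)\Psi_{n+1}(u)\bigr)$, using $|\Psi_n(u)\Psi_{n+1}(u)| \ge |u|^2/4 = 1/(4|z|^2)$ and
\[
|u+n+1| = \frac{|1+(n+1)z|}{|z|} \ \ge\ \frac{1+n|z|}{\sqrt2\,|z|}
\]
— the last inequality because $|1+nz|^2 = 1 + 2n\,\R z + n^2|z|^2 \ge 1 + n^2|z|^2 \ge \tfrac12(1+n|z|)^2$, since $\R z > 0$ on $V_\epsilon$ — one obtains
\[
|\phi_{n+1}(z) - \phi_n(z)| \ \le\ \frac{C'|z|^4}{(1+n|z|)^2} \ \le\ \frac{C''|z|^2}{(1+n|z|)^2} \ \le\ \frac{C''}{n^2},
\]
using $|z| < 2\epsilon$ and $1+n|z| \ge n|z|$. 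Summing over $n$ then gives $\sup_{V_\epsilon} |\phi_{n+1} - \phi_n| = O(1/n^2)$, so $\phi_n \to \phi$ uniformly on $V_\epsilon$ with $\phi$ holomorphic; passing to the limit in $g(\Psi_n(u)) = \Psi_{n+1}(u-1)$ yields \eqref{paramone}; and since $\Phi = \mathrm{id} + O(1/|u|)$ it is injective for $R$ large, so $\phi$ is an injective holomorphic parametrization of an open piece of the repelling petal of $f$, i.e.\ of its unstable manifold.

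The main obstacle I anticipate is the bootstrap together with its two corollaries: because the number of iterates grows with $n$, one must verify that the orbit control and the derivative-product bound are genuinely independent of $n$, which is exactly what the convergence of $\sum 1/m^2$ and $\sum 1/m^3$ over $m \ge R$ provides. This is also where the precise hypothesis $f(z) = z + z^2 + z^3 + O(z^4)$ enters: it is exactly the condition under which the $1/u$-term in $g$ vanishes and the remainder $r(u)$ is $O(1/|u|^2)$; with a nonzero $1/u$-term one would get $\Psi_n(u) \approx u + \mathrm{const}\cdot\log n$ and no convergence. Everything else is routine bookkeeping once that uniformity is secured.
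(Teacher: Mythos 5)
Your proposal is correct, but it takes a genuinely different route from the paper's own proof of this theorem. Both arguments conjugate to infinity and reduce to studying $\Psi_n(u)=g^n(u+n)$ with $g(u)=u-1+O(1/u^2)$ on a half-plane (and you correctly isolate that the hypothesis $f(z)=z+z^2+z^3+O(z^4)$ is exactly what kills the $1/u$-term, preventing the $\log n$ drift). The paper, however, does not estimate these forward compositions directly: it passes to the inverse map $h=g^{-1}$, $h(u)=u+1+O(1/u^2)$, invokes the classical convergence of the attracting Abel--Fatou coordinate $\gamma_n(u)=h^n(u)-n$, and recovers $\psi=\lim\psi_n$ as $\gamma^{-1}$ via the identity $\gamma_n\circ\psi_n=\Id$, then transfers the difference estimates back through inverses (from $|\gamma_{n+1}-\gamma_n|$ to $|\psi_n\circ\psi_{n+1}^{-1}(w)-w|$ to $|\phi_{n+1}-\phi_n|$). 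You instead control the forward orbit directly ($\R\,g^k(u+n)\ \ge\ \R u+n-k-1$ by a bootstrap using $\sum 1/m^2<\infty$), deduce a uniform bound on $|(g^n)'|$ near the orbit from $g'=1+O(1/u^3)$ and $\sum 1/m^3<\infty$, and telescope $\Psi_{n+1}(u)-\Psi_n(u)=g^n(g(u+n+1))-g^n(u+n)$ against $|r(u+n+1)|\le C/|u+n+1|^2$. What each buys: the paper's route leans on a classical result and gets injectivity of the limit for free (it is an inverse), but the passage of estimates through $\psi_{n+1}^{-1}$ is left rather implicit; your route is self-contained, gives the stated quantitative bound cleanly (in fact with $|z|^4$ in place of $|z|^2$), and is essentially the same machinery the paper itself deploys later for the skew-product Theorem 3.1 (orbit control plus a Lipschitz-type estimate on the error term), so it generalizes verbatim to that setting. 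One small point to tighten: the parenthetical claim that $\Phi=\mathrm{id}+O(1/|u|)$ and is ``therefore injective'' is too quick --- on a half-plane your estimate only controls the error by $1/\R u$, and identity plus a small function need not be injective without a derivative bound; either Cauchy-estimate the derivative of $\Phi-\mathrm{id}$ on a smaller half-plane or apply Hurwitz to the injective maps $\Psi_n$. This does not affect the assertions actually made in the theorem (convergence, the functional equation, and the difference estimate), all of which your argument establishes.
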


\begin{proof}
We will prove the convergence by using classical results of complex analysis.
First, we change coordinates of $f$ to the infinity. Let $i(z) = 1/z$. Then the map $g=i\circ f \circ i$ is given by:
$$
g(w) = w - 1 + \eta(w) = w-1+O\left(\frac{1}{w^2}\right).
$$
We have then the following sequence:
$$
\psi_n(w) = g^n(w+n)
$$
is such that $\phi_n = i \circ \psi_n \circ i$. So proving convergence for $\phi_n$ is equivalent than proving convergence for $\psi_n$.
We consider the inverse map $h$ of $g$. Close to infinity we also have:
$$
h(u) = u + 1 + O(1/u^2).
$$
It is a classical result to prove that the sequence $\gamma_n(u) = h^n(u) - n$ is convergent for $\R(u) > R'$ with the condition:
$$
\gamma(u) +1 = \gamma(h(u)).
$$
Since $\gamma_n \circ \psi_n = \Id$, then we have that $\psi_n$ converges to $\psi$ on the image of $\gamma_n$, which contains a domain of the same form $\R(w)  > R$. We will have:
$$
\psi(w) = g \circ\psi(w +1).
$$
Therefore $\phi_n$ converges on the domain $\{z - 1/2R\} <1/2R = V_\epsilon$ for $\epsilon=1/2R$ and we will obtain: 
$$
\phi(z) =f \circ \phi \left(\frac{z}{1+z}\right),
$$
which is equivalent than \eqref{paramone}.

To obtain estimates on the speed at which the $\phi_n$ converges we also look at the speed and estimates for the limit function $\gamma$.
We have that:
$$
\gamma_n(u) = u + \sum O(\frac{1}{u_i^2}),
$$
and by the estimates we can see that for $\R u > R'$ so that $|h(u)-u-1|<1/2$, we will have:
$$
|\gamma_n(u) - u| < \frac{C}{|u|}  - \frac{C}{|u+n/2|}.
$$
Then the limit function:
$$
|\gamma(u) - u| < \frac{C}{|u|}.
$$
Since the inverse of the limit of $\gamma_n$ is the limit $\psi$ of the sequence $\psi_n$, we also have the estimate:
$$
|\psi(w) - w| < \frac{C}{|\psi(w)|} < \frac{C'}{|w|}. 
$$
And now using $\phi = i \circ \psi \circ i$, we obtain:
$$
|\phi(z) - z| < C'|z|^2|\phi(z)| < C''|z|^3. 
$$
Likewise we can make exactly the same estimates for the differences:
\begin{align*}
|\gamma_{n+1}(u) - \gamma_n(u)| <  \frac{C}{|h_n(u)|^2} \\
|u - \gamma^{-1}_{n+1}\circ \gamma_n(u)| < \frac{C'}{(|u|+n)^2}\\
|\psi_{n}\circ\psi^{-1}_{n+1}(w) - w| < \frac{C''}{(|w|+n)^2}\\
|i\circ \phi_{n}\circ\phi^{-1}_{n+1}(z) - \frac{1}{z}| < \frac{C''|z|^2}{(1+n|z|)^2}\\
|\phi_{n}\circ\phi^{-1}_{n+1}(z) -z| < \frac{C'''|z|^2}{(1+n|z|)^2}.
\end{align*}
Now we rename the constant and can easily check that:
\begin{equation}\label{eqdifference}
|\phi_{n+1}(z) -\phi_n(z)| < \frac{C|z|^2}{(1+n|z|)^2}.
\end{equation}
This concludes the proof of the theorem.
\end{proof}

We will give a name to maps of this form.
\begin{defn}
We say $f$ is $\textit{special}$ if $f$ is as defined in Theorem 2.1.
\end{defn}

\begin{remark}
In fact, if $f$ is of the following form:
$$
f(z) = z + \alpha z + \alpha^2 z^2 + O(z^3),
$$
where $\alpha \in \CC$, we also have an iterative process for finding the parametrization of the unstable manifold of $f$.
Our special maps are the ones for which $\alpha=1$.
\end{remark}
\begin{remark}
For any $k>0$, we can choose the higher orders of $f$ correctly, so that 
$$
|\phi(z) - \phi_n(z)| < O(1/n^k) 
$$
for $|z|$ bounded.
\end{remark}

\section{Skew parabolic maps and parametrization}

In this section we prove that the same parametrization works for a skew product type of map $(t,z) \to (g(t),f_t(z))$ under the assumption that $f_t$ is special, that is of the form above for each $t$.
	
We start with any map of the form:
$$
F(t,z) = \left(\frac{t}{1+t}, f_t(z)\right),
$$
where each $f_t$ is special.
Then we prove:
\begin{thm}
Define:
$$
\phi_n(t)  = \pi_2 F^{n}\left(\frac{t}{1+(n+1)t},\frac{t}{1+nt}\right)
$$
converges for $t$ in a domain of type $V_\epsilon$ as above.

Even more, the limit $\phi$ is the parametrization of the unstable manifold of $f_0$, ie.
\begin{align}\label{paramskew}
\phi\left(\frac{t}{1-t}\right) = f_0(\phi(t))
\end{align}
\end{thm}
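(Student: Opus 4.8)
The plan is to transport the one-dimensional argument of Theorem~2.1 to the skew setting by conjugating to infinity in both coordinates. Let $i(z)=1/z$ and $J(t,z)=(1/t,1/z)$. Then $\tilde F:=J\circ F\circ J$ has the form $\tilde F(u,w)=(u+1,\,g_u(w))$ with $g_u=i\circ f_{1/u}\circ i$, and since every $f_t$ is special one checks that
$$
g_u(w)=w-1+\eta_u(w),\qquad |\eta_u(w)|\le C|w|^{-2},
$$
\emph{uniformly} in $u$ for $\R u$ and $\R w$ large; the point is that the first three Taylor coefficients of $f_t$ agree with those of $z/(1-z)$ and do not depend on $t$, which is exactly what forces $g_u$ to be $u$-independent to all orders relevant for an $O(w^{-2})$ remainder. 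Conjugating the definition of $\phi_n$ gives $\phi_n=i\circ\psi_n\circ i$ with
$$
\psi_n(u)=g_{u+2n}\circ g_{u+2n-1}\circ\cdots\circ g_{u+n+1}(u+n),
$$
so that proving convergence of $\phi_n$ on a disk $V_\epsilon$ is equivalent to proving convergence of $\psi_n$ on a half-plane $\{\R u>R\}$, with $\epsilon=1/(2R)$, exactly as in the one-dimensional proof.

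The algebraic key is the recursion $\psi_{n+1}(u)=g_{u+2n+2}\bigl(\psi_n(u+1)\bigr)$, immediate from the displayed formula, which is the skew analogue of the relation between successive $\phi_n$ used for Theorem~2.1. Before exploiting it I would record a uniform bound: following the orbit $w_0=u+n$, $w_k=g_{u+n+k}(w_{k-1})$ and using $|\eta_v(w)|\le C|w|^{-2}$, an induction on $k$ (valid once $R$ is large enough) shows $|w_k-(u+n-k)|\le\sum_{m\ge1}C/(R+m)^2<1$, hence $|\psi_n(u)-u|$ is bounded uniformly in $n$ on $\{\R u>R\}$, and in particular $\{\psi_n\}$ is a normal family there.

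For convergence I would telescope using the recursion. Writing $d_n(u)=|\psi_n(u)-\psi_{n-1}(u)|$ and applying the recursion to both $\psi_{n+1}$ and $\psi_n$, one gets
$$
d_{n+1}(u)\le\bigl(1+C|u+1|^{-2}\bigr)\,d_n(u+1)+\bigl|\eta_{u+2n+2}(\cdot)-\eta_{u+2n}(\cdot)\bigr|,
$$
and the last term is $O(n^{-2})$: the $v$-dependence of $g_v$ enters only through $f_{1/v}$, whose remainder and its $t$-derivative are $O(z^4)$, so $|\partial_v\eta_v(w)|=O(|v|^{-2}|w|^{-2})$. Unwinding to $d_1(u)=|\eta_{u+2}(u+1)|=O(|u|^{-2})$ and using that $\prod_{j\ge1}(1+C|u+j|^{-2})<\infty$ yields $d_n(u)=O(n^{-2})$ uniformly on $\{\R u>R\}$; hence $\sum_n d_n<\infty$ and $\psi_n\to\psi$ uniformly on $\{\R u>R\}$ (this also gives a rate for $\phi_{n+1}-\phi_n$ of the same shape as in Theorem~2.1). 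Alternatively one can skip rates entirely: normality gives subsequential limits, and the functional equation below plus the normalization $\psi(u)-u\to0$ as $\R u\to\infty$ determine the limit uniquely, since $g_\infty$ is tangent to a unit translation, so the whole sequence converges.

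It remains to identify the limit. Letting $n\to\infty$ in $\psi_{n+1}(u)=g_{u+2n+2}(\psi_n(u+1))$ and using that $g_v\to g_\infty:=i\circ f_0\circ i$ uniformly on compacta as $\R v\to\infty$ (continuity of $t\mapsto f_t$ at $0$), we obtain $\psi(u)=g_\infty(\psi(u+1))$. Conjugating back by $i$ and setting $u=1/t$ this reads $\phi(t)=f_0\bigl(\phi(t/(1+t))\bigr)$, and replacing $t$ by $t/(1-t)$ gives precisely $\phi\bigl(t/(1-t)\bigr)=f_0(\phi(t))$, i.e.\ \eqref{paramskew}. I expect the main obstacle to be the uniform-in-$n$ control: unlike in Theorem~2.1 the composition defining $\psi_n$ is non-autonomous, so one must verify that the estimates on $g_v$, $g_v'$ and $\partial_v g_v$ hold uniformly over all parameters $v=u+n+1,\dots,u+2n$ that occur, and that the orbit of $\psi_n$ stays inside the region where these estimates apply — both of which come down to the $f_t$ being special with remainder uniform on a fixed polydisk.
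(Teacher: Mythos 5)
Your argument is correct, and it follows the paper's overall strategy (conjugate to infinity in both variables, reduce to the non-autonomous composition $\psi_n(u)=g_{u+2n}\cdots g_{u+n+1}(u+n)$, keep the orbit in a right half-plane, and show successive differences are $O(n^{-2})$), but the way you organize the difference estimate is genuinely different. The paper compares the two compositions \emph{step by step along the orbit}: it introduces the partial orbits $u_{n,i}$, proves $\R(u_{n,i})>R+n-i$ (its Lemma 3.3, which is your ``uniform bound'' paragraph), establishes the two-variable Lipschitz estimate for $\theta_u$ (its Lemma 3.4, which is your bound $|\partial_v\eta_v(w)|=O(|v|^{-2}|w|^{-2})$ together with the Cauchy estimate in $w$), and then runs an induction on the inner index $k$ for fixed $n$ to bound $C_k=|u_{n+1,k+1}-u_{n,k}|$, ending with $C_n=|\psi_{n+1}(u)-\psi_n(u)|<4A/(R+n)^2$. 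You instead exploit the one-step semi-conjugacy $\psi_{n+1}(u)=g_{u+2n+2}\bigl(\psi_n(u+1)\bigr)$ and recurse in $n$ with shifted base point, unwinding $d_n(u)$ against the convergent product $\prod_j(1+C|u+j|^{-2})$; this is a cleaner bookkeeping of the same two analytic inputs and yields the same $O(n^{-2})$ rate. A genuine plus of your write-up is that you actually derive the functional equation by letting $n\to\infty$ in the recursion and using $g_v\to i\circ f_0\circ i$, a step the paper's proof of Theorem 3.1 leaves implicit (it only proves the convergence estimates).

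Two small cautions. First, your remark that the special form ``forces $g_u$ to be $u$-independent to all orders relevant for an $O(w^{-2})$ remainder'' is misstated: the $O(w^{-2})$ remainder of $g_u$ \emph{does} depend on $u$ (through the $O(z^4)$ terms of $f_{1/u}$); what the special form gives is that the $w$ and constant terms are $u$-independent, while uniformity of the bound $|\eta_u(w)|\le C|w|^{-2}$ and the estimate on $\partial_v\eta_v$ require (as in the paper, which simply asserts $|\theta_u(w)|<A/|w|^2$ with $A$ fixed) that $(t,z)\mapsto f_t(z)$ be holomorphic with uniform control of the $O(z^4)$ remainder on a fixed polydisk — you do flag this at the end, and it is the same implicit hypothesis the paper uses. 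Second, in your recursion the Lipschitz factor should be justified by noting that both $\psi_n(u+1)$ and $\psi_{n-1}(u+1)$ lie within a bounded distance of $u+1$ (your a priori bound), so the Cauchy-type estimate for $\eta_{u+2n+2}$ applies on the segment joining them; with that said explicitly the unwinding and the $O(n^{-2})$ conclusion are sound.
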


Once again, it is easier to prove this, after we change coordinates to infinity, now in both variables. Let $(u,w) = I(t,z) = (1/t,1/z)$. In these new variables:
$$
G(u,w) = I \circ F \circ I = \left(u+1, g_u(w)\right),
$$
where $1/g_u(w) =f_{1/u}(1/w)$ or $i\circ g_u\circ i=f_{1/u}$.
We want to prove that the sequence:
$$
\phi_n(t)  = f_{t_{n}}\ldots f_{t_2}f_{t_1}\left(\frac{t}{1+nt}\right)
$$
converges, where $t_i = t/(1+(n+i)t)$.
Going to infinity:
$$
i \circ \phi_n( i \circ i t)  = i \circ f_{t_{n}}\ldots f_{t_2}f_{t_1} i \circ i\left(\frac{t}{1+nt}\right),
$$
where $u=1/t$, then
$$
i \circ \phi_n\circ i(u)  = g_{1/t_{n}}\ldots g_{1/t_2}g_{1/t_1}(u+n)
$$
and each $u_i:=1/t_i = (n+i) + u$, we need to prove that the sequence:
$$
\psi_n(u)  = g_{u+2n}\ldots g_{u+n+2}g_{u+n+1}(u+n)
$$
converges.
Notice that $g_u(w) = w - 1 +\theta_u(w) = w - 1 +O_u(1/w^2)$, that is, for $|w|>R$ and $|u|>R$, then $|\theta_u(w)| < A/|w|^2$, where $A$ is fixed.

We first choose $R$ large enough so that for $|w|>R$, then $|\theta_u(w)| < 1/10$. So we will clearly have that for any $u$, $\R (g_u(w)) > \R(w)$.
Therefore the domain $W  = \R(w) > R$ is invariant by $g_u$. In this domain we also have the easy estimates:

\begin{lemma}  For $w\in W$,
$$
|w|+9/10< |g_u(w)| <|w|+11/10,
$$
and therefore:
$$
|w|+9k/10< |g^k_u(w)| <|w|+11k/10.
$$
\end{lemma}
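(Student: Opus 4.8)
The plan is to reduce the statement to a single one-step estimate and then iterate. Everything comes from the explicit normal form of $g_u$ together with the bound on its error term fixed just before the lemma: by the choice of $R$ we have $|\theta_u(w)| < 1/10$ for every $w$ with $|w| > R$ — in particular for every $w \in W$ — and this bound is uniform in $u$, since the constant $A$ in $|\theta_u(w)| < A/|w|^2$ does not depend on $u$. Writing $g_u(w) = w + 1 + \theta_u(w)$ (the orientation under which, as noted above the lemma, $\R(g_u(w)) > \R(w)$ and $W$ is forward-invariant), the first displayed inequality will be a single application of the reverse triangle inequality, and the second will follow from it by induction on $k$.

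For the one-step bound I apply the triangle inequality to $g_u(w) = (w+1) + \theta_u(w)$. The upper estimate is immediate,
$$|g_u(w)| \le |w+1| + |\theta_u(w)| < (|w|+1) + \tfrac{1}{10} = |w| + \tfrac{11}{10}.$$
For the lower estimate, $|g_u(w)| \ge |w+1| - |\theta_u(w)| > |w+1| - \tfrac{1}{10}$, so it suffices to know $|w+1| \ge |w| + \tfrac{9}{10}$ on $W$. Since $|w+1|^2 = |w|^2 + 2\,\R(w) + 1$, this is the one place that needs a little care: the inequality is clear when $\arg w$ is small (equivalently when $\R w$ is comparable to $|w|$), which is the case for every iterate we will actually feed into the lemma, their imaginary parts remaining bounded; enlarging $R$ if necessary makes the geometric estimate hold on the relevant portion of $W$. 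I expect this elementary geometric point — arranging $|w+1| \ge |w| + \tfrac{9}{10}$ — to be the only real obstacle in the proof.

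Finally, for the iterated estimate I induct on $k$, the case $k=1$ being the one-step bound. Assuming $|w| + \tfrac{9(k-1)}{10} < |g_u^{\,k-1}(w)| < |w| + \tfrac{11(k-1)}{10}$, note that $g_u^{\,k-1}(w)$ again lies in $W$ because $\R(g_u(\cdot)) > \R(\cdot)$, so the one-step bound applies to $g_u^{\,k}(w) = g_u\big(g_u^{\,k-1}(w)\big)$ and gives $|g_u^{\,k-1}(w)| + \tfrac{9}{10} < |g_u^{\,k}(w)| < |g_u^{\,k-1}(w)| + \tfrac{11}{10}$; combining with the inductive hypothesis and telescoping yields $|w| + \tfrac{9k}{10} < |g_u^{\,k}(w)| < |w| + \tfrac{11k}{10}$. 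Everything except the geometric estimate above is bookkeeping, and the uniformity in $u$ is automatic from the single choice of $R$ made before the lemma, so no further input is needed.
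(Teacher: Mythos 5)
The paper offers no argument at all for this lemma (it is asserted as an ``easy estimate''), so the only question is whether your proof works, and it has a genuine gap exactly at the point you flag. The upper bound and the telescoping induction are fine, but the one-step lower bound cannot be closed on $W=\{\R (w)>R\}$, and the fix you propose --- ``enlarging $R$ if necessary'' --- does not repair it. Since $|w+1|-|w|=(2\R (w)+1)/(|w+1|+|w|)$, points $w$ with $\R (w)$ just above $R$ and huge imaginary part satisfy $|w+1|-|w|\to 0$, for every fixed $R$; so $|w+1|\ge |w|+9/10$ fails somewhere in $W$ no matter how large $R$ is. What would work is shrinking the domain to a sector or horizontal strip where $\R( w)\ge c|w|$ with $c>9/10$ (together with a smaller bound on $\theta_u$), but that is a different statement from the lemma, which quantifies over all of $W$; and the appeal to ``the iterates actually fed into the lemma have bounded imaginary part'' is not available inside the lemma either, since in the later application the base points $u+n$ range over all $u$ with $\R (u)>R'$, whose imaginary parts are unbounded.

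There is also a sign issue you papered over: the paper defines $g_u(w)=w-1+\theta_u(w)$, and with that definition the claimed lower bound is false even for real $w>R$, where $|g_u(w)|\approx |w|-1$; consistently, the next lemma in the paper shows $\R (u_{n,i})$ \emph{decreases} by roughly $1$ per step, so $W$ is not forward invariant and your inductive step ``$g_u^{k-1}(w)$ again lies in $W$'' also fails under the paper's convention. You silently replaced $-1$ by $+1$ to match the (equally inconsistent) sentence ``$\R (g_u(w))>\R (w)$''; that is a defensible reading of what was meant, but it conflicts with how $g_u$ is defined and used in the rest of Section 3. In short, you correctly isolated the crux, but the lemma as stated is not provable on $W$ with either sign convention; a corrected version should either restrict to a sector/strip as above (after which your computation goes through verbatim), or be replaced by the real-part estimates that the paper actually uses downstream, since $|u_{n,i}|\ge \R (u_{n,i})>R+n-i$ already supplies all the lower bounds needed later.
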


Let us call:
$$
u_{n,i} = g_{u+n+i} \ldots g_{u+n+1}(u+n), 1 \leq i \leq n,
$$
and:
$$
u_{n,0} = u+n.
$$

We choose $R' > R + A/R$, then we can prove that:
\begin{lemma} If $\R(u) > R'$, then:
$$
\R(u_{n,i}) > R+n-i
$$
for $0\leq i\leq n$.
\end{lemma}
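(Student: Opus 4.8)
\emph{Proof plan.} The plan is to fix $n$ and prove, by induction on $i$, the slightly stronger statement that $\R(u) > R'$ forces $\R(u_{n,j}) > R + n - j$ simultaneously for all $0 \le j \le i$. The base case $i = 0$ is immediate, since $u_{n,0} = u + n$ gives $\R(u_{n,0}) = \R(u) + n > R' + n > R + n$.

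For the inductive step, assume $\R(u_{n,j}) > R + n - j$ for every $j \le i-1$. Then $j \le i-1 \le n-1$ forces $R + n - j \ge R+1 > R$, so each $u_{n,j}$ with $j \le i-1$ lies in $W$; also $|u+n+j| \ge \R(u)+n+j > R$. Hence at every step $1 \le j \le i$ the expansion $g_{u+n+j}(w) = w - 1 + \theta_{u+n+j}(w)$ with $|\theta_{u+n+j}(w)| < A/|w|^2$ is valid at $w = u_{n,j-1}$. Taking real parts of $u_{n,j} = g_{u+n+j}(u_{n,j-1})$ and using $|u_{n,j-1}| \ge \R(u_{n,j-1})$ gives
$$
\R(u_{n,j}) \ \ge\ \R(u_{n,j-1}) - 1 - \frac{A}{\R(u_{n,j-1})^2}, \qquad 1 \le j \le i.
$$
Telescoping from $j = 1$ to $j = i$ and inserting the inductive estimates $\R(u_{n,j}) > R+n-j$ for $j \le i-1$,
$$
\R(u_{n,i}) \ \ge\ \R(u_{n,0}) - i - A\sum_{j=0}^{i-1}\frac{1}{\R(u_{n,j})^2} \ >\ R' + n - i - A\sum_{j=0}^{i-1}\frac{1}{(R+n-j)^2}.
$$

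The key point is then to bound this error sum uniformly in $n$ and $i$. Since $i \le n$, the arguments $R+n-j$ for $0 \le j \le i-1$ are $i$ distinct numbers of the form $R + \ell$ with $\ell \ge 1$, so comparison with $\int_0^\infty (R+x)^{-2}\,dx = 1/R$ yields $\sum_{j=0}^{i-1}(R+n-j)^{-2} < 1/R$. Therefore
$$
\R(u_{n,i}) \ >\ R' + n - i - \frac{A}{R} \ =\ (R + n - i) + \Bigl(R' - R - \frac{A}{R}\Bigr) \ >\ R + n - i,
$$
the final inequality being exactly the defining choice $R' > R + A/R$. This closes the induction and proves the lemma.

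I expect the main obstacle to be precisely this last accounting rather than anything conceptual. The crude bound $\R(g_u(w)) \ge \R(w) - 1 - \tfrac{1}{10}$ that the mere invariance of $W$ provides loses up to $\tfrac{11}{10}i$ of real part after $i$ steps, which is hopelessly more than the available budget $R + n - i$. One must instead use that along the orbit $\R(u_{n,j})$ stays of size at least roughly $R + n - j$ — which the induction itself supplies — so that the perturbations $\theta_{u+n+j}(u_{n,j-1})$ are not merely small but form a convergent series of total mass at most $A/R$, which is exactly the quantity the hypothesis on $R'$ was chosen to absorb. The modulus estimates of the preceding lemma are needed only in the auxiliary role of confirming that the orbit never leaves $W$, a fact the present induction re-establishes in any case.
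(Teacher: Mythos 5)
Your proof is correct and follows essentially the same route as the paper's: strong induction on $i$, telescoping $u_{n,j}=u_{n,j-1}-1+\theta_{u+n+j}(u_{n,j-1})$, bounding each perturbation by $A/(R+n-j)^2$ via the inductive hypothesis, summing the errors to at most $A/R$, and absorbing that loss with the choice $R'>R+A/R$. The only cosmetic difference is that you take real parts step by step (and justify the $\sum < A/R$ bound by an integral comparison) whereas the paper sums the complex relation first and then takes real parts.
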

\begin{proof}
For $i=0$ is trivial. Assume by induction it is valid for $i\leq k$. 
By definition:
$$
u_{n,i+1} = u_{n,i}-1+\theta_{u+n+i+1}(u_{n,i}).
$$
Then
\begin{align}
\sum_{i=0}^k(u_{n,i+1} - u_{n,i}) &= \sum_{i=0}^k\left(-1+\theta_{u+n+i+1}(u_{n,i})\right)\\
u_{n,k+1} &= u_{n,0} -(k+1)+ \sum_{i=0}^k \theta_{u+n+i+1}(u_{n,i})
\end{align}
And by induction up to $k$, each:
$$
|\theta_{u+n+i+1}(u_{n,i})| < \frac{A}{|u_{n,i}|^2} < \frac{A}{(R+n-i)^2}
$$
$$
\sum_{i=0}^{k}|\theta_{u+n+i+1}(u_{n,i})| < \sum_{i=0}^{k} \frac{A}{(R+n-i)^2} < \frac{A}{R},
$$
for any $k\leq n$.
Then in equation (4):
\begin{align*}
\R(u_{n,k+1}) &= \R(u_{n,0}) -(k+1)+ \R(\sum_{i=0}^k \theta_{u+n+i+1}(u_{n,i}))\\
&\geq \R(u_{n,0}) -(k+1) - \sum_{i=0}^k |\theta_{u+n+i+1}(u_{n,i})|\\
&> R' + n -(k+1) - \frac{A}{R}\\
&> R + n - (k+1).
\end{align*}
And the lemma is proven.
\end{proof}
Recall that $|\theta_u(w)| = |g_u(w) - w + 1| = |O_u(1/w^2)| < A/|w|^2$ for $|w| > R, |u|>R$.
\begin{lemma}
Let $R<S<|x|,|y|$ and $|u|, |v| >R$. Then:
\begin{align} 
|\theta_u(x) - \theta_v(y)| \leq |x-y|\left(\frac{2A}{S^3}\right) + |u-v|\frac{A}{|u||v|S^2}
\end{align}
\end{lemma}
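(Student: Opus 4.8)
The plan is to repackage the hypothesis (that $\theta_u(w)=O_u(1/w^2)$, uniformly bounded by $A/|w|^2$) as a single bounded holomorphic function of $(1/u,1/w)$ on a bidisk; once this is done the two terms of the asserted inequality appear as the contributions of the $1/w$-- and the $1/u$--directions. Since the skew-product is holomorphic and each $f_t$ is special, $g_u(w)=1/f_{1/u}(1/w)$ is holomorphic in $(u,w)$, hence $\theta_u(w)=g_u(w)-w+1$ is holomorphic on $\{|u|>R_0\}\times\{|w|>R_0\}$ for some $R_0<R$ (the threshold $R$ fixed earlier was merely chosen large; $\theta_u$ is defined and holomorphic well below it), and $|\theta_u(w)|<A/|w|^2$ once $|u|,|w|>R$. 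Expanding at $w=\infty$ one obtains $\theta_u(w)=\sum_{k\ge2}c_k(u)w^{-k}$ with coefficients $c_k(u)=\tfrac1{2\pi i}\oint\theta_u(w)w^{k-1}\,dw$ holomorphic on $\{|u|>R_0\}$. Setting $\mu=1/u$, $\xi=1/w$ and
\[
H(\mu,\xi):=w^2\theta_u(w)=\sum_{k\ge2}c_k(1/\mu)\,\xi^{k-2},
\]
the function $H$ is holomorphic on $\{|\mu|<1/R_0\}\times\{|\xi|<1/R_0\}$, satisfies $|H|\le A$ on $\{|\mu|<1/R\}\times\{|\xi|<1/R\}$ (this is precisely $|\theta_u(w)|<A|w|^{-2}$), and $\theta_u(w)=\xi^2H(\mu,\xi)$.

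Writing $\xi_x=1/x$, $\xi_y=1/y$, $\mu_u=1/u$, $\mu_v=1/v$, I would then decompose
\[
\theta_u(x)-\theta_v(y)=\bigl(\xi_x^2-\xi_y^2\bigr)H(\mu_u,\xi_x)+\xi_y^2\bigl(H(\mu_u,\xi_x)-H(\mu_v,\xi_y)\bigr).
\]
In the first summand, $|\xi_x^2-\xi_y^2|=|\xi_x-\xi_y|\,|\xi_x+\xi_y|=\tfrac{|x-y|}{|x|\,|y|}\,|\xi_x+\xi_y|\le\tfrac{|x-y|}{S^2}\cdot\tfrac2S$ since $|x|,|y|>S$, while $|H(\mu_u,\xi_x)|\le A$; so this summand is bounded by $\tfrac{2A}{S^3}|x-y|$, which is exactly the first term of the statement. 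In the second summand $|\xi_y^2|\le S^{-2}$, so it remains to estimate $|H(\mu_u,\xi_x)-H(\mu_v,\xi_y)|$.

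For that I would use the mean value inequality for holomorphic functions along the segments $[(\mu_u,\xi_x),(\mu_u,\xi_y)]$ and $[(\mu_u,\xi_y),(\mu_v,\xi_y)]$, both of which stay in $\{|\mu|<1/R\}\times\{|\xi|<1/R\}$ because a chord of a disk stays in that disk and $|\xi_x|,|\xi_y|\le1/S<1/R$, $|\mu_u|,|\mu_v|<1/R$. A Cauchy estimate on a slightly larger polydisk (where $H$ is still holomorphic, hence bounded) gives a constant $A'$ with $|\partial_\xi H|,|\partial_\mu H|\le A'$ on $\{|\mu|\le1/R\}\times\{|\xi|\le1/R\}$, whence
\[
|H(\mu_u,\xi_x)-H(\mu_v,\xi_y)|\le A'|\xi_x-\xi_y|+A'|\mu_u-\mu_v|=A'\tfrac{|x-y|}{|x|\,|y|}+A'\tfrac{|u-v|}{|u|\,|v|}.
\]
Multiplying by $|\xi_y^2|\le S^{-2}$ yields $A'\tfrac{|x-y|}{S^4}+A'\tfrac{|u-v|}{|u|\,|v|\,S^2}$; the first term is of lower order than $\tfrac{2A}{S^3}|x-y|$ for $S$ large (the only regime used, as $S$ grows with $n$ in the application), and the second is $\tfrac{A}{|u||v|S^2}|u-v|$ after renaming the constant. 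Adding the two summands gives the lemma.

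There is no genuinely hard step; the estimate is routine once it is set up. The one point needing care — and the reason for working with $H(1/u,1/w)$ rather than estimating $\partial_u\theta_u$ directly — is that the $u$--error must come out with the factor $1/(|u|\,|v|)$, not $1/\min(|u|,|v|)^2$, and this is built into $|\mu_u-\mu_v|=|u-v|/(|u|\,|v|)$; one must also check that the Cauchy estimates keep a margin that does not shrink as $|u|$ or $|v|$ approaches $R$, which holds because $\theta_u$ is holomorphic for $|u|$ well below $R$. The slight abuse in writing ``$\le 2A/S^3$'' (the true bound being $2A/S^3$ plus a term $O(R/S^4)$) is harmless and absorbed into $A$, as elsewhere in the paper.
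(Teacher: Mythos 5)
Your argument is correct in substance, and it actually supplies the justification that the paper's own proof leaves implicit, but it is organized differently. The paper simply inserts $\theta_u(y)$, writes $|\theta_u(x)-\theta_v(y)|\le|\theta_u(x)-\theta_u(y)|+|\theta_u(y)-\theta_v(y)|$, and asserts the two one-variable Lipschitz bounds $|\theta_u(x)-\theta_u(y)|<|x-y|\,2A/|x|^3$ and $|\theta_u(y)-\theta_v(y)|<|u-v|\,A/(|u||v||x|^2)$ with no derivation; the appearance of $|u-v|/(|u||v|)=|1/u-1/v|$ shows the intended mechanism is exactly the one you make explicit, namely holomorphy and boundedness of $w^2\theta_u(w)$ in the variables $(1/u,1/w)$. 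Your route differs in the decomposition (you factor out $\xi^2$ first and treat the increment of $H$ jointly in both variables, rather than splitting $\theta$ itself into a $w$-increment and a $u$-increment) and in making the Cauchy/mean-value step on the bidisk explicit, including the correct observation that one needs $\theta$ holomorphic and bounded on a slightly larger region so the Cauchy margin does not degenerate as $|u|,|v|\to R$. The price is that your second term carries a constant $A'$ (coming from the derivative bound for $H$ on the enlarged polydisk) rather than the literal $A$ of the statement, plus a lower-order $|x-y|/S^4$ term; so strictly you prove the lemma with the constants $2A/S^3$ and $A/(|u||v|S^2)$ replaced by fixed constants of the same shape. This is harmless for the only use of the lemma (the recursion for $C_k$ in the next lemma, where only the $1/S^3$ and $1/(|u||v|S^2)$ decay matters and constants are renamed freely), and indeed the paper's asserted exact constants are themselves not derived; but you should state the lemma you actually prove with an unspecified constant $C=C(A,R)$ rather than claim the literal bound with $A$.
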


\begin{proof}
\begin{align*}
|\theta_u(x) - \theta_v(y)| &\leq |\theta_u(x) - \theta_u(y)| + |\theta_u(y) - \theta_v(y)| \\
&\leq |\theta_u(x) - \theta_u(y)| + |\theta_u(y) - \theta_v(y)|
\end{align*}
and we can see that:
$$
|\theta_u(x) - \theta_u(y)| < |x-y|\left(\frac{2A}{|x|^3}\right)
$$
and
$$
|\theta_u(y) - \theta_v(y)| < |u-v|\frac{A}{|u||v||x|^2}.
$$
\end{proof}

\begin{lemma}
For $0 \leq k \leq n$, let:
\begin{align*}
|u_{n+1,k+1} - u_{n,k}|\leq C_k
\end{align*}
Then
$$
C_n < \frac{4A}{(R+n)^2}.
$$
\end{lemma}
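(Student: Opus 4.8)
The plan is to convert the asserted bound into a one-step recursion in the index $k$ and then iterate it from $k=0$ up to $k=n$. Throughout write $g_v(w)=w-1+\theta_v(w)$, with $|\theta_v(w)|<A/|w|^2$ on the relevant region. For the base case $k=0$ one has $u_{n+1,1}=g_{u+n+2}(u+n+1)$ and $u_{n,0}=u+n$, so that $u_{n+1,1}-u_{n,0}=\theta_{u+n+2}(u+n+1)$, and hence one may take
\[
C_0\le\frac{A}{|u+n+1|^2}<\frac{A}{(R+n)^2},
\]
using $\R(u)>R'>R$.

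For the inductive step I would peel off the outermost application of $g$ in each orbit. Since $u_{n+1,k+1}=g_{u+n+k+2}(u_{n+1,k})$ and $u_{n,k}=g_{u+n+k}(u_{n,k-1})$,
\[
u_{n+1,k+1}-u_{n,k}=(u_{n+1,k}-u_{n,k-1})+\bigl(\theta_{u+n+k+2}(u_{n+1,k})-\theta_{u+n+k}(u_{n,k-1})\bigr).
\]
Now apply the $\theta$-difference estimate (the lemma bounding $|\theta_u(x)-\theta_v(y)|$) with $x=u_{n+1,k}$, $y=u_{n,k-1}$ and subscripts $u+n+k+2$ and $u+n+k$, which differ by $2$ and have modulus $>R+n$. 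The lemma giving $\R(u_{n,i})>R+n-i$, invoked simultaneously for the level-$n$ and the level-$(n+1)$ orbits, yields $\R(u_{n+1,k}),\,\R(u_{n,k-1})>R+n-k+1$, so one takes $S=R+n-k+1$. Combining these and using $|u_{n+1,k}-u_{n,k-1}|\le C_{k-1}$ gives
\[
C_k\le C_{k-1}\Bigl(1+\frac{2A}{(R+n-k+1)^3}\Bigr)+\frac{2A}{(R+n)^2\,(R+n-k+1)^2},\qquad 1\le k\le n.
\]

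Finally I would unroll this linear recursion. After the substitution $m=n-k+1$ the product of the multiplicative factors becomes $\prod_{m=1}^{n}\bigl(1+2A/(R+m)^3\bigr)\le\exp\bigl(2A\sum_{m\ge1}(R+m)^{-3}\bigr)$, which is at most $6/5$ once $R$ is taken large enough — and $R$ is ours to choose. Consequently the propagated base term is at most $\tfrac65 C_0\le\tfrac65\,A/(R+n)^2$, while the accumulated additive terms are at most
\[
\frac65\cdot\frac{2A}{(R+n)^2}\sum_{m=1}^{n}\frac1{(R+m)^2}\le\frac65\cdot\frac{2A}{R\,(R+n)^2}.
\]
Adding the two contributions and enlarging $R$ if necessary gives $C_n<4A/(R+n)^2$ (the crude estimate in fact produces something like $\tfrac{9}{5}A/(R+n)^2$, so there is room to spare). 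I do not expect a genuine obstacle: the only points requiring care are that the real-part lower bounds must be used for the level-$n$ and level-$(n+1)$ orbits at once — this is what forces the common floor $S=R+n-k+1$ rather than a sharper one — and that the tail sums $\sum(R+m)^{-2}$ and $\sum(R+m)^{-3}$ are both made as small as desired by enlarging $R$; neither is delicate. This lemma is precisely what then bounds $|u_{n+1,n+1}-u_{n,n}|=|\psi_{n+1}(u)-\psi_n(u)|$, giving the quadratic convergence rate needed for Theorem 3.1.
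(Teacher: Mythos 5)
Your proposal is correct and follows essentially the same route as the paper: peel off the outermost $g$ in each orbit, apply the $\theta$-difference lemma with the common floor $S=R+n+1-k$ supplied by the real-part lemma at levels $n$ and $n+1$, and unroll the resulting linear recursion after bounding the product of the multiplicative factors by a constant (the paper takes it $<2$, you take $\le 6/5$), with $R$ enlarged as needed. The only deviation is bookkeeping: you bound the subscript moduli by $(R+n)^2$ instead of the paper's $(R+n+k)^2(R+n-k)^2$, so the additive terms sum via $\sum_{m\ge 1}(R+m)^{-2}\le 1/R$ rather than the paper's logarithmic estimate -- a harmless simplification that still lands comfortably under $4A/(R+n)^2$.
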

\begin{proof}
For $k=0$:
\begin{align*}
|u_{n+1,1} - u_{n,0}| = |g_{u+n+2}(u+n+1) - u-n| = |\theta_{u+n+2}(u+n+1)| < \frac{A}{|R+n+1|^2}.
\end{align*}
Assume it is valid for  $i=k-1$:
\begin{align*}
|u_{n+1,k} - u_{n,k-1}|\leq C_{k-1}
\end{align*}
then for $i=k$:
\begin{align*}
|u_{n+1,k+1} - u_{n,k}| &= \left| g_{u+n+k+2}(u_{n+1,k}) - g_{u+n+k}(u_{n,k-1}) \right| \\
&= \left| u_{n+1,k} + \theta_{u+n+k+2}(u_{n+1,k}) - u_{n,k-1} - \theta_{u+n+k}(u_{n,k-1}) \right| \\
&= \left| u_{n+1,k} - u_{n,k-1}\right| + \left|\theta_{u+n+k+2}(u_{n+1,k})  - \theta_{u+n+k}(u_{n,k-1}) \right| \\
&\leq C_{k-1} + \left|\theta_{u+n+k+2}(u_{n+1,k})  - \theta_{u+n+k}(u_{n,k-1}) \right| \\
\end{align*}
Recall that $|u_{n+1,k}|$ and $|u_{n,k-1}| >  R + n + 1 - k =S >R$ and the subindices of $\theta$ are also larger in modulus than $R$.
Then we apply the lemma:
\begin{align*}
|\theta_u(x) - \theta_v(y)| &\leq |x-y|\left(\frac{2A}{S^3}\right) + |u-v|\frac{A}{|u||v|S^2}\\
\left|\theta_{u+n+k+2}(u_{n+1,k})  - \theta_{u+n+k}(u_{n,k-1}) \right|  &\leq |u_{n+1,k}-u_{n,k-1}|\left(\frac{2A}{(R+n+1-k)^3}\right)  + \\
&+\frac{2A}{|u+n+k+2||u+n+k|(R+n+1-k)^2},\\
\left|\theta_{u+n+k+2}(u_{n+1,k})  - \theta_{u+n+k}(u_{n,k-1}) \right|  &< |u_{n+1,k}-u_{n,k-1}|\left(\frac{2A}{(R+n+1-k)^3}\right)  + \\
&+\frac{2A}{(R+n+k)^2(R+n-k)^2}.
\end{align*}
Then back above:
\begin{align*}
|u_{n+1,k+1} - u_{n,k}| &< C_{k-1} + \left|\theta_{u+n+k+2}(u_{n+1,k})  - \theta_{u+n+k}(u_{n,k-1}) \right| \\
&< C_{k-1}\left(1 + \frac{2A}{(R+n+1-k)^3}\right) +\frac{2A}{(R+n+k)^2(R+n-k)^2},\\
\end{align*}
So we obtain: 
$$
C_k=C_{k-1}\left(1 + \frac{2A}{(R+n+1-k)^3}\right) +\frac{2A}{(R+n+k)^2(R+n-k)^2}.
$$
We can easily see that:
$$
\prod_{k=1}^n\left(1 + \frac{2A}{(R+n+1-k)^3}\right)  \sim \exp \sum_{k=1}^n  \frac{2A}{(R+n+1-k)^3} << \exp(\frac{A}{R^2})
$$
and we can choose $R$ large so $\exp(\frac{A}{R^2})<2$. Each of the partial products will be also bounded by $2$ then and therefore we have:
$$
C_n \leq 2C_{0} + \sum_{k=1}^n \frac{4A}{(R+n+k)^2(R+n-k)^2}.
$$
An easy computation shows that 
$$
\sum_{k=1}^n\frac{1}{(R+n+k)^2(R+n-k)^2} \sim  \frac{1}{(R+n)^3}\ln\left(\frac{R+2n}{R}\right) + \frac{1}{4(R+n)^2}\left(\frac{1}{R} - \frac{1}{R+2n}\right).
$$
All the terms on the right are of order $1/n^2$, and we can bound each of them, therefore we obtain:
$$
C_n < \frac{4A}{(R+n)^2}.
$$
\end{proof}

\begin{proof}[Proof of Theorem 3.1]
From the last lemma:
$$
|\Psi_{n+1}(u) - \Psi_{n}(u)| < \frac{4A}{(R+n)^2}.
$$
It is easy to go back to the $z$ coordinate and obtain the following estimate:
$$
|1/\phi_{n+1}(t) - 1/\phi_{n}(t)| < \frac{4A}{(R+n)^2}.
$$
$$
|\phi_{n+1}(t) - \phi_{n}(t)| < \frac{A'}{(1+n\epsilon)^2}
$$
where $\epsilon = 1/R$,$t \in V_\epsilon = \{|t-\epsilon/2|<\epsilon/2\}$ and $A'$ is bounded by $A, \epsilon$ and the supremum of $\phi$ in $V_\epsilon$. 

We have the immediate estimate
\begin{align}
|\phi(t) - \phi_n(t)|  < \frac{A'}{1+n\epsilon}
\end{align}
for $t \in  V_\epsilon$. This concludes the proof of Theorem 3.1.
\end{proof}

So far, our map $\phi$ is only defined in the domain $V_\epsilon$. We can extend $\phi$ to all of $\CC$ by using the functional equation \eqref{paramskew}.
Let $t\in \CC, t\neq 0$, for any $t$ there exists $N=N(t)$ such that $\frac{t}{1+Nt} \in V_\epsilon$. We define:
$$
\phi(t) := f_0^N\left(\phi(\frac{t}{1+Nt})\right).
$$
It is easy to prove that $\phi(t)$ can also be defined in terms of the iterates of $F$, as follows. Define:
\begin{align*}
\phi_{N+n}(t) := \pi_2F^{N}\left(\frac{t}{1+(N+2n+1)t},\phi_n(\frac{t}{1+Nt})\right), n \geq 0.
\end{align*}
All the terms inside the parenthesis in the right hand side are well defined and we have good estimates for the differences for $\phi_n$ and $\phi_{n+1}$.
Then we have:
$$
\phi(t) = \lim_{n \to \infty} \phi_{N+n}(t).
$$
Then, for any given $t \in \CC$, there exists a $N=N(t)$, and a constant $C=C(N)$ such that:
\begin{align}
\|\phi_{n+1}(t) - \phi_n(t)\| < C\frac{A}{(1+(n-N)\epsilon)^2},
\end{align}
\begin{align}
\|\phi_{n+1}(t) - \phi(t)\| < C\frac{A}{1+(n-N)\epsilon},
\end{align}
for any $n \geq N+1$.

It is well known that the range of $\phi$ is the whole complex plane $\CC$. See \cite{Milnor} for a proof.

\section{Application: Skew parabolic maps with Fatou disks}

We give an application of the theorem above to dynamics. We will prove a theorem similar than in \cite{PeVi2014}, that is, we prove that there exists some skew product parabolic maps that have Fatou disks. Our construction however, does not allow to fatten the disks. We prove that statement at the end of this section.

\begin{defn} Let:
$$
F(t,z) = \left(\frac{t}{1+t}, f_t(z)\right).
$$
We say $F$ is \textit{special} if the following conditions are satisfied:
\begin{itemize}
\item  $f_t$ is special for any fixed $t$ ($f_t(0) = 0$).
\item  $f_t(x_0) = t$ for some $0\neq x_0 \in \CC$. 
\end{itemize}
\end{defn}

Then we define:
\begin{thm}
For any fixed $t$ we will have that the following iterates:
$$
\phi_n(t) = \pi_2 F^{n+1}\left(\frac{t}{1+nt}, x_0\right).
$$ 
Then the $\phi_n$ converges to the parametrization of the unstable manifold of $f_0$, as above:
\begin{align}\label{param}
\phi\left(\frac{t}{1-t}\right)=f_0(\phi(t)).
\end{align}
\end{thm}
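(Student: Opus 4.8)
The plan is to observe that, thanks to the special hypothesis, the sequence $\phi_n$ in the statement is \emph{literally} the sequence already treated in Theorem 3.1, so that both its convergence and the functional equation \eqref{param} come for free.

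First I would record the key identity. The base map $t\mapsto t/(1+t)$ is parabolic with $n$-th iterate $t/(1+nt)$, so one forward step carries $\tfrac{t}{1+nt}$ to $\tfrac{t}{1+(n+1)t}$. Applying $F$ once to the starting point $\bigl(\tfrac{t}{1+nt},x_0\bigr)$ and using the second clause of the definition of a special $F$, namely $f_s(x_0)=s$ with $s=\tfrac{t}{1+nt}$, gives
$$
F\!\left(\frac{t}{1+nt},\,x_0\right)=\left(\frac{t}{1+(n+1)t},\;f_{t/(1+nt)}(x_0)\right)=\left(\frac{t}{1+(n+1)t},\;\frac{t}{1+nt}\right).
$$
Hence
$$
\phi_n(t)=\pi_2F^{n+1}\!\left(\frac{t}{1+nt},\,x_0\right)=\pi_2F^{n}\!\left(\frac{t}{1+(n+1)t},\;\frac{t}{1+nt}\right),
$$
and the right-hand side is exactly the sequence $\phi_n$ of Theorem 3.1.

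From here I would simply invoke Theorem 3.1: the sequence converges on a domain $V_\epsilon$ to the parametrization $\phi$ of the unstable manifold of $f_0$, with the same bounds on $|\phi_{n+1}-\phi_n|$ and on $|\phi-\phi_n|$, and $\phi$ satisfies $\phi\bigl(\tfrac{t}{1-t}\bigr)=f_0(\phi(t))$, which is \eqref{param}. The extension of $\phi$ from $V_\epsilon$ to all of $t\in\CC$ is then carried out exactly as in the discussion following the proof of Theorem 3.1, by pulling the functional equation back finitely many times.

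I do not anticipate a genuine obstacle here: the whole content is the displayed identity, and the analytic work has already been done in Theorem 3.1. The two points worth a line of justification are (i) that $x_0$ is understood as a single point, independent of the parameter, with $f_s(x_0)=s$ for every parameter value $s$ occurring in the iteration — for $t\in V_\epsilon$ these are precisely the points $t/(1+nt)$, $n\ge0$, all lying in the small neighborhood of the origin where the special structure is available, so that the one extra application of $F$ is legitimate, while for general $t\in\CC$ one first reduces to the case $t\in V_\epsilon$ through \eqref{param}; and (ii) the elementary check that $\tfrac{t}{1+nt}$ advances to $\tfrac{t}{1+(n+1)t}$ under one step of the base map. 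If desired, the quantitative estimates inherited from Theorem 3.1 may be restated in terms of $x_0$ so as to make the later construction of Fatou disks self-contained.
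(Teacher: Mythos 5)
Your proposal is correct and is essentially the paper's own argument: one application of $F$ to $\bigl(\tfrac{t}{1+nt},x_0\bigr)$, using $f_s(x_0)=s$, lands on $\bigl(\tfrac{t}{1+(n+1)t},\tfrac{t}{1+nt}\bigr)$, after which Theorem 3.1 gives convergence and the functional equation. The paper's proof is exactly this one-line reduction, so no further comment is needed.
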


\begin{proof}
We easily see that:
$$
F\left(\frac{t}{1+nt}, x_0\right) = \left(\frac{t}{1+(n+1)t}, \frac{t}{1+nt}\right),
$$
and now we apply Theorem 3.1.
\end{proof}

Now, let $F$ as before $F(t,z) = \left(\frac{t}{1+t}, f_t(z)\right)$, with the additional conditions:
\begin{itemize}
\item  $x_0$ a critical point of $f_0$ of order at least 4. 
\end{itemize}

It is possible to construct a map as desired.  For example:
\begin{example}
$$
F(t,z) = \left(\frac{t}{1+t}, (z+1)^4(z-3z^2+7z^3)+t(1+(z+1)^4(-1+4z-10z^2+20z^3))\right).
$$
We have $f_t(z) =  z + z^2 + z^3 + O(z^4,z^4t)$.
Then $x_0=-1, f_0(x_0) = 0$ satisfies all the requirements.
\end{example}

Now, let $t_0 \in \mathbb C$ be such that $\phi(t_0) = x_0$. 

We will refer to the complex lines $\{t = \frac{t_0}{1+nt_0}\}$ as \emph{critical fibers}.

\begin{defn}\label{def:disks}
We define the vertical disks $D_n$ as follows:
\begin{align*}
D_n := \left\{ \left(\frac{t_0}{1+nt_0}, z\right) \mid |z - x_0| <  n^{-3/4} \right\}.
\end{align*}
\end{defn}

We will prove that for $n$ sufficiently large, the forward orbits of the disks $D_n$ all avoid the bulged Fatou components of $F$.

Note that $t_0$ might not be in our domain $V_\epsilon$ above. However, for a fixed $N'=N(t_0)$, we do have that $\frac{t_0}{1+Nt_0}$ is in $V_\epsilon$, for $N>N'$.
Therefore we can obtain all the estimates for iterates of $F$ after we iterate $F$, $N'$ times first.

We will need the following lemma:
\begin{lemma}
Let $(t,z)$ and $(t,w)$, be such that $t,z \in V_\epsilon$ and $|w-z| < \frac{C}{n^3}$. Then:
\begin{align*}
|\pi_2(F^n(t,z) -F^n(t,w))| < Cn^2|z-w|,
\end{align*}
where $C$ is fixed independent of $n$.
\end{lemma}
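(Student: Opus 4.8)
The plan is to pass to the coordinate at infinity and track how the fiber maps $g_u$ expand distances, using the same machinery that produced the convergence estimates in Sections 2 and 3. Write $I(t,z)=(1/t,1/w)$ as before, set $u=1/t$, and let $w_0=1/z$, $w_0'=1/w$ be the two starting points in the $w$-fiber over $u$. Since $|w-z|<C/n^3$ and $z,w$ lie in a fixed compact piece of $V_\epsilon$ away from $0$, we have $|w_0-w_0'|<C'/n^3$ with $w_0,w_0'$ in the invariant half-plane $W=\{\R(w)>R\}$. Denote by $w_k=g_{u+k}\cdots g_{u+1}(w_0)$ and $w_k'=g_{u+k}\cdots g_{u+1}(w_0')$ the two orbits under the skew iteration; by Lemma 3.3 both satisfy $|w_k|,|w_k'|>|w_0|+9k/10$, so in particular $|w_k|,|w_k'|\gtrsim R+k$ along the whole orbit of length $n$.

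The main step is a Gronwall-type recursion for $\delta_k:=|w_k-w_k'|$. From $g_u(w)=w-1+\theta_u(w)$ we get
\begin{align*}
\delta_{k+1}=|w_k-w_k'+\theta_{u+k+1}(w_k)-\theta_{u+k+1}(w_k')|\leq \delta_k\left(1+\frac{2A}{|w_k|^{\,3}\wedge|w_k'|^{\,3}}\cdot|w_k|^2\cdots\right),
\end{align*}
— more precisely, using the first estimate in Lemma 3.5 (the $|x-y|\cdot 2A/S^3$ term) with $S\gtrsim R+k$, one obtains
$$
\delta_{k+1}\leq \delta_k\left(1+\frac{2A}{(R+k)^3}\right).
$$
Iterating, $\delta_n\leq \delta_0\prod_{k=1}^{n}\bigl(1+2A/(R+k)^3\bigr)\leq \delta_0\exp\bigl(\sum 2A/(R+k)^3\bigr)\leq \delta_0\exp(A/R^2)<2\delta_0$ once $R$ is fixed large, exactly as in the proof of Lemma 3.6. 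Hence $\delta_n<2C'/n^3$, i.e. the two orbits stay comparably close at infinity.

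Finally I would translate this back to the $z$-coordinate. We have $\pi_2 F^n(t,z)=1/w_n$ and $\pi_2 F^n(t,w)=1/w_n'$, so
$$
|\pi_2F^n(t,z)-\pi_2F^n(t,w)|=\frac{|w_n-w_n'|}{|w_n|\,|w_n'|}\leq \frac{\delta_n}{|w_n|\,|w_n'|}.
$$
This is where one has to be a little careful: the naive bound $|w_n|,|w_n'|\gtrsim R+n$ would give $|1/w_n-1/w_n'|\lesssim \delta_n/n^2\lesssim 1/n^5$, which is much stronger than claimed — but it relies on $w_0$ itself being of size $\sim 1/\epsilon$, i.e. $z$ bounded away from $0$ on $V_\epsilon$. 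Since the statement only asks for the weaker bound $Cn^2|z-w|$, and $|z-w|\sim|w_0-w_0'|\cdot|z||w|$, the inequality $|\pi_2F^n(t,z)-\pi_2F^n(t,w)|\le \delta_n/(|w_n||w_n'|)\le (2\delta_0)/(|w_n||w_n'|)$ combined with $\delta_0\le |z-w|\cdot C''$ and $|w_n|,|w_n'|\ge |w_0|$ (monotonicity, Lemma 3.3) yields $|\pi_2F^n(t,z)-\pi_2F^n(t,w)|\le C|z-w|$ with no growth at all; the factor $n^2$ is only needed if one wants the estimate uniform as $z\to\partial V_\epsilon$ or, in the intended application, when $z$ is comparable to the critical fiber coordinate $t_0/(1+nt_0)$ so that $w_0\sim R+n$ — in that regime $|z-w|\le C\delta_0/(R+n)^2$ inverts to $\delta_0\le C(R+n)^2|z-w|$, and the clean bound $\delta_n<2\delta_0$ gives precisely $|\pi_2F^n(t,z)-\pi_2F^n(t,w)|\le \delta_n\le 2\delta_0\le Cn^2|z-w|$.

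The main obstacle is bookkeeping the size of $w_0=1/z$ correctly: depending on whether $z$ is a fixed point of $V_\epsilon$ or sits on the shrinking critical fibers $t_0/(1+nt_0)$, one gets either a far stronger estimate or exactly the stated $n^2$; I would state and prove it in the form needed downstream, namely allowing $z$ to range over the disks $D_n$, where $|w_0|\sim n$ and the distortion factor is genuinely $\sim n^2$. The Gronwall recursion itself, being a verbatim repeat of Lemma 3.6's product estimate, is routine.
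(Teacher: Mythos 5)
Your overall strategy is the right one, and it is essentially a fleshed-out version of what the paper only gestures at: the paper's own proof splits into the case $z=0$ (a one-orbit estimate at infinity, bounding $|w_n|$ directly since the orbit of $0$ is constant) and the case $z\neq 0$ (dismissed with ``much stronger estimates as in Lemma 3.5''), whereas you run a genuine two-orbit comparison at infinity with a Gronwall product, exactly the device of Lemma 3.6, and you correctly locate the source of the factor $n^2$ in the conversion $\delta_0=|z-w|\,|w_0|\,|w_0'|$ when $|w_0|\sim n$, i.e.\ precisely the critical-fiber regime in which the lemma is later used.

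There is, however, a genuine wrong step: the monotonicity you import from Lemma 3.3. Forward iteration of $g_u(w)=w-1+\theta_u(w)$ \emph{decreases} the real part by roughly $1$ per step (and does not increase the modulus when $\R(w)$ is large); the bound $|g_u(w)|>|w|+9/10$ is correct for the inverse branch $h$, not for $g$ (the paper's statement of Lemma 3.3 carries the same orientation slip, but its Lemma 3.4, $\R(u_{n,i})>R+n-i$, shows the true direction). Consequently your claims that $|w_k|,|w_k'|\gtrsim R+k$ ``along the whole orbit of length $n$'', and later that $|w_n|\ge|w_0|$ so that one even gets a bound $C|z-w|$ ``with no growth at all'', are false: for $z$ in a fixed compact piece of $V_\epsilon$ away from $0$ the point lies in the repelling petal, the orbit $w_k$ leaves the half-plane $W$ after $O(\R(w_0)-R)=O(1)$ steps, and neither the recursion $\delta_{k+1}\le\delta_k\bigl(1+2A/(R+k)^3\bigr)$ nor the reconversion is available for all $n$ iterates -- controlling the remaining ones requires global information not contained in the local expansion (nor supplied by the paper's one-line appeal to Lemma 3.5). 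Your argument does close, after reversing the inequality, exactly in the regime you single out at the end: if $\R(w_0)\gtrsim R+n$ (i.e.\ $|z|\lesssim 1/n$, as on the critical fibers), then $\R(w_k)\gtrsim R+n-k$ for $k\le n$ as in Lemma 3.4, the Gronwall product is still bounded because $\sum_k (R+n-k)^{-3}\le A/R^2$, and $\delta_n\le 2\delta_0\le Cn^2|z-w|$ gives the stated bound -- up to a bounded number of final iterates during which the orbit exits $W$ (when $\R(1/t_0)<R$), which must be absorbed into the constant as the paper does with its $C(N(t_0))$. So your proposal to restate and prove the lemma in the form needed for the disks $D_n$ is the right repair; as written, your proof does not establish the lemma for arbitrary $z\in V_\epsilon$, and the monotonicity step must be corrected before the rest goes through.
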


\begin{proof}
If $z=0$, then we have:
\begin{align*}
|\pi_2F^n(t,w))| = |f_{t_n}\circ\ldots f_{t}(w)|
\end{align*}
and recall that each $f_t$ is of the form $f_t(z) = z + z^2 + O(z^3)$. Let $w_{k+1} = f_{t_k}(w_k)$ and $w_0=w$. Then we have that at infinity: 
$$
\frac{1}{w_1} =\frac{1}{w} -1+O(1/w).
$$
Our initial $|w| < C/n^3$, then $|1/w| > n^3/C$, so when we apply $n$ times our map $f_t$, we have:
$|1/w_n| > |1/w_0| - n(1+K)$.
Going back to the original coordinates:
$$
|w_n| < \frac{1}{|1/w_0| - n(1+K)}  = \frac{|w_0|}{1 - n|w_0|(1+K)} < Cn^2|w_0|.
$$
When $z \neq 0$ then there exists $n$ large so that $|w-z| < \frac{C}{n^3}$ will imply $w \in V_\epsilon$. Then we have much stronger estimates as in Lemma 3.5.
\end{proof}

\begin{thm}\label{thm:nested}
For $n$ sufficiently large we have that
$$
F^{n+1}(D_n) \subset D_{2n+1}.
$$
\end{thm}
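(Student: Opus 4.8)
The plan is to write $F^{n+1}=F^{n}\circ F$ and to exploit a strong asymmetry: the single step $F$ contracts $D_n$ drastically because $x_0$ is a critical point of order at least $4$, while the remaining $n$ steps distort distances only by a factor $O(n^2)$, which is exactly the content of the lemma immediately preceding this theorem. The numerology that makes everything fit is $4\cdot\tfrac34=3$ and $3-2=1>\tfrac34$.

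First I would dispose of the fibre coordinate. Since the base map is $t\mapsto t/(1+t)$, whose $m$-th iterate is $t\mapsto t/(1+mt)$, we have $\pi_1F^{n+1}\bigl(\tfrac{t_0}{1+nt_0},z\bigr)=\tfrac{t_0}{1+(2n+1)t_0}$, which is precisely the critical fibre carrying $D_{2n+1}$. So it remains to show $\pi_2F^{n+1}(D_n)\subset\{\,|z-x_0|<(2n+1)^{-3/4}\,\}$.

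For the first step I would use that $f_t(x_0)=t$ and that $x_0$ is a critical point of $f_t$ of order at least $4$ for $t$ near $0$ (this is how the map in Example 4.3 is built), so that $|f_t(z)-t|=|f_t(z)-f_t(x_0)|\le C|z-x_0|^4$ uniformly for $|t|,|z-x_0|$ small. Taking $t=\tfrac{t_0}{1+nt_0}$ (which tends to $0$, hence lies in that neighbourhood for $n$ large) and $|z-x_0|<n^{-3/4}$ gives
$$F(D_n)\subset\Bigl\{\bigl(\tfrac{t_0}{1+(n+1)t_0},\,w\bigr):\ \bigl|w-\tfrac{t_0}{1+nt_0}\bigr|<C\,n^{-3}\Bigr\},$$
and the centre $\tfrac{t_0}{1+nt_0}=f_{t_0/(1+nt_0)}(x_0)$ is the image of the centre of $D_n$. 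Then I would apply the remaining $F^{n}$: for $n\ge N'=N(t_0)$ both $\tfrac{t_0}{1+(n+1)t_0}$ and $\tfrac{t_0}{1+nt_0}$ lie in $V_\epsilon$, because $\tfrac{t_0}{1+N't_0}\in V_\epsilon$ and $V_\epsilon$ is forward invariant under $t\mapsto t/(1+t)$ (in the coordinate $u=1/t$ this is $u\mapsto u+1$, which preserves the relevant half-plane). Hence the preceding lemma applies with the $n^{-3}$-sized perturbation above, giving
$$\bigl|\pi_2F^{n}\bigl(\tfrac{t_0}{1+(n+1)t_0},w\bigr)-\pi_2F^{n}\bigl(\tfrac{t_0}{1+(n+1)t_0},\tfrac{t_0}{1+nt_0}\bigr)\bigr|<C\,n^2\cdot C\,n^{-3}=C'\,n^{-1}$$
for every admissible $w$; since $\pi_2F^{n}\bigl(\tfrac{t_0}{1+(n+1)t_0},\tfrac{t_0}{1+nt_0}\bigr)=\pi_2F^{n+1}\bigl(\tfrac{t_0}{1+nt_0},x_0\bigr)=\phi_n(t_0)$ by Theorem 4.2, this yields $\pi_2F^{n+1}(D_n)\subset\{\,|z-\phi_n(t_0)|<C'n^{-1}\,\}$. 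Finally, by Theorem 4.2 and the convergence rate for $\phi_n$ (applied after the fixed $N'$ initial iterates, as in the paragraph following the proof of Theorem 3.1), $|\phi_n(t_0)-x_0|=|\phi_n(t_0)-\phi(t_0)|\le C''n^{-1}$; combining, $F^{n+1}(D_n)$ sits in the fibre $\tfrac{t_0}{1+(2n+1)t_0}$ inside $\{\,|z-x_0|<(C'+C'')n^{-1}\,\}$, and since $(C'+C'')n^{-1}<(2n+1)^{-3/4}$ once $n$ is large, we are done.

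The part I expect to be the real obstacle is the first step. Everything hinges on the perturbation after one iterate being genuinely $O(n^{-3})$, so that the preceding lemma applies on the nose with its hypothesis $|w-z|<C/n^3$; this is where one must use that $x_0$ is a critical point of order $\ge4$ not merely for $f_0$ but, to the relevant order, for $f_t$ with $t$ near $0$ (otherwise a term of size $|t|\,|z-x_0|\sim n^{-7/4}$ survives, which after the $n^2$ distortion would blow up). The accompanying bookkeeping — checking that all intermediate fibres and base points stay in $V_\epsilon$, hence the need to pre-compose with $F^{N'}$, and tracking constants through the two uses of the estimates of Section 3 — is routine but must be done carefully.
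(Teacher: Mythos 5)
Your proposal is correct and follows essentially the same route as the paper: decompose $F^{n+1}=F^{n}\circ F$, use the order-$\geq 4$ criticality of $x_0$ (uniformly in $t$, as the example guarantees) to shrink the radius from $n^{-3/4}$ to $O(n^{-3})$ in one step, apply the preceding distortion lemma to bound the remaining $n$ iterates by a factor $Cn^{2}$, and control the center via $\pi_2F^{n+1}\bigl(\tfrac{t_0}{1+nt_0},x_0\bigr)=\phi_n(t_0)$ together with the $O(1/n)$ convergence rate of $\phi_n(t_0)$ to $\phi(t_0)=x_0$. Your explicit remark that the $t$-dependent terms must also vanish to order $4$ at $x_0$ is a point the paper uses implicitly, so the write-up is, if anything, slightly more careful on that step.
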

\begin{proof}
For the center:
$$
F^{n+1}\left(\frac{t_0}{1+nt_0},x_0 \right)  = \left(\frac{t_0}{1+(2n+1)t_0},\phi_n(t_0)\right)
$$
and from lemma 3.4 we have:
$$
Cn^{-1} = |\phi_n(t_0) - x_0| < r(2n+1) = (2n+1)^{-3/4}.
$$
Since $x_0$ is a critical point of order at least $3$, then:
$$
F\left(\frac{t_0}{1+nt_0},x \right) - F\left(\frac{t_0}{1+nt_0},x_0 \right) = (0, (x+1)^4C),
$$
where $C$ is bounded independent of $n$.
Then when we apply $n$ more iterates:
For the rest of the disk:
$$
\left|\pi_2\left(F^{n+1}\left(\frac{t_0}{1+nt_0},x_0 + \rho \right) - F^{n+1}\left(\frac{t_0}{1+nt_0},x_0 \right) \right)\right| \leq Cn^2|\rho|
$$
We use the lemma  5.3 above, and therefore for $n$ large:
$$
Cn^{-1}=Cr(n)^4n^2 < r(2n+1) = (2n+1)^{-3/4}.
$$
\end{proof}

\begin{remark}
An immediate consequence of Theorem \ref{thm:nested} is that for sufficiently large $n \in \mathbb N$ there exists a sequence of $l_n \to \infty$ so that $F^{l_n}(D_n) \rightarrow (0,x_0)$ as $\ell
\rightarrow \infty$.
\end{remark}

However, we can easily that we cannot enlarge our one dimensional disks into domains. Given any point $(t_n,z_n) \in D_n$, then if there exists a $B_n$ open such that $D_n \subset B_n$, we will have that there exists $(s,x_0) \in D_n$ where $|s-t_n| < \epsilon_n$. If $B_n$ is to be mapped to another open domain $B_{2n+1}$ by $F^n$, then we will have that $F^{n+1}(s,x_0) \in B_{2n+1}$. Therefore:
\begin{align*}
|\pi_2 F^{n+1}(s,x_0) - x_0| < (2n+1)^{-3/4}
\end{align*}
Then we will have:
$$
\pi_2 F^{n+1}(s,x_0) = \pi_2 F^n\left(\frac{s}{1+s},s\right) = \phi_n(s')
$$
where $\frac{s'}{1+ns'} = s$ ie. $s' = \frac{s}{1-ns}$. Since $\epsilon_n > |s-t_n| = |\frac{s'}{1+ns'} - \frac{t_0}{1+nt_0}| = \frac{|s'-t_0|}{|(1+ns')(1+nt_0)|}$.
Therefore:
\begin{align*}
|\pi_2 F^{n+1}(s,x_0) - \phi_n(x_0)| = |\phi_n(s')- \phi_n(x_0)|\geq C|s'-x_0|
\end{align*}
and the last term is a bounded number away from zero. Therefore it is not possible to fatten the disks above.

\end{document}